\soulregister\ref{7}
\soulregister\begin{7}
\soulregister\end{7}
\newtheorem{mylem}{Lemma}
\newtheorem{mythm}{Theorem}
\newcommand\numberthis{\addtocounter{equation}{1}\tag{\theequation}}
\newcommand{\R}{\mathbb{R}}
\newcommand{\N}{\mathbb{N}}
\newcommand{\s}{\mathbb{S}}
\title{\LARGE \bf
Stability Analysis of Parabolic Linear PDEs with Two Spatial Dimensions Using Lyapunov Method and SOS
}
\author{Evgeny Meyer and Matthew M. Peet
\thanks{This work was supported by the National Science Foundation under Grants No. 1301851 and 1301660.}
\thanks{Evgeny Meyer  is a Ph.D student with the School for Engineering of Matter, Transport and Energy, Arizona State University, Tempe, AZ, 85281 USA
        {\tt\small edmeyer@asu.edu}}%
\thanks{Matthew M. Peet is an assistant professor with the School for Engineering of Matter, Transport and Energy, Arizona State University, Tempe, AZ, 85281 USA
        {\tt\small mpeet@asu.edu}}%
}
\begin{document}

\maketitle
\thispagestyle{empty}
\pagestyle{empty}

\begin{abstract}

In this paper, we address stability of parabolic linear Partial Differential Equations (PDEs).
We consider PDEs with two spatial variables and spatially dependent polynomial coefficients.
We parameterize a class of Lyapunov functionals and their time derivatives by polynomials and express stability as optimization over polynomials.
We use Sum-of-Squares and Positivstellensatz results to numerically search for a solution to the optimization over polynomials. 
We also show that our algorithm can be used to estimate the rate of decay of the solution to PDE in the $L_2$ norm.
Finally, we validate the technique by applying our conditions to the 2D biological KISS PDE model of population growth and an additional example.
\end{abstract}

\section{INTRODUCTION}

Stability analysis and controller design for Partial Differential Equations (PDEs) is an active area of research \cite{c1}, \cite{c2}.
One approach to stability analysis of PDEs is to approximate the PDEs with Ordinary Differential Equations (ODEs) using, e.g. Galerkin's method or finite difference, and then apply finite-dimensional optimal control methods, e.g. \cite{c3}--\cite{c5p}.
In this paper we consider stability analysis without discretization.
Specifically, we use Sum-of-Squares (SOS) optimization to construct Lyapunov functionals for parabolic PDEs with two spatial variables and spatially dependent coefficients.

It is well-known that existence of a Lyapunov function for a system of ODEs or PDEs is a sufficient condition for stability.
For example,~\cite{c6} uses a Lyapunov approach and Linear Operator Inequalities (LOIs) to provide sufficient conditions for exponential stability of a controlled heat and delayed wave equations.
Another method based on Lyapunov and semigroup theories was applied in \cite{c7} for analysis of wave and beam PDEs with constant coefficients and delayed boundary control.
In \cite{c8} a Lyapunov based analysis of semilinear diffusion equations with delays gave stability conditions in terms of Linear Matrix Inequalities (LMIs).
Extensive examples of applying the backstepping method to the boundary control of PDEs can be found in \cite{c9}--\cite{c13}.
Briefly speaking, backstepping uses a Volterra operator to search for an invertible mapping from the original PDE to a chosen "target" PDE, known to be stable.
In order to find such mapping, one has to solve analytically or numerically a PDE for Volterra operator's kernel.
If the mapping is found, it provides the boundary control law.
Applications for two-dimensional cases were discussed in \cite{c14} and \cite{c15}.
However, backstepping requires us to guess on the target PDE and solve the PDE for kernel, which may be a challenging task for PDEs with two spatial variables and spatially dependent coefficients. Moreover, backstepping cannot be used for stability analysis in the absence of a control input.

Note that SOS has been previously applied in \cite{c16} and \cite{c17} to find Lyapunov functionals for 1D parabolic PDE.
Input-Output analysis of PDE systems with SOS implementation is discussed in \cite{c18}.
Examples of using SOS in controller design for one-dimensional PDEs can be found in \cite{c19}--\cite{c21}.

The main contribution of this paper is to provide an algorithm for stability analysis of 2D parabolic PDEs with spatially varying coefficients.
We search for polynomial $s$ which defines a Lyapunov functional of the form\vspace{-.01in}
\[
V(u(t,\cdot))=\int_\Omega s(x)u(t,x)^2\,dx\vspace{-.01in}
\]
where $u$ is the solution of the PDE, $t$ represents time, $x$ and $\Omega$ are the spatial variable and domain.
We introduce stability conditions in the form of parameter dependent LMIs, which certify Lyapunov inequalities, i.e.\vspace{-.01in}
\begin{equation}
 V(u(t,\cdot))>0\text{ and } \frac{d}{dt}\left[V(u(t,\cdot))\right]<0\text{ for all}\ t>0.\vspace{-.01in}
\label{eq:Lyapu_ieq}
\end{equation}
There exist several algorithms for optimization over polynomials which can be applied to~\eqref{eq:Lyapu_ieq} in order to find $V$. See e.g.,~\cite{Reza_DCDS2015} for a survey on algorithms for optimization of polynomials. 
In this paper, we apply SOS and the Positivstellensatz results to find a solution to parameter dependent LMIs.
For details on Positivstellensatz see \cite{c22} or \cite{c23}.
We use the MATLAB toolbox SOSTOOLS (see \cite{c24} for details on SOSTOOLS) and SeDuMi, a well-known semidefinite programming solver, to solve the SOS optimization problem.

The paper is organized as follows.
In Section 2 we specify the notation and provide some background.
Lyapunov stability conditions for parabolic PDEs are given in Section 3.
We demonstrate the proposed method in Section 4 and present SOS optimization problems in Section 5.
Finally, we discuss numerical implementations in Section 6 and conclude the paper in Section 7 with ideas about future research steps.

\section{PRELIMINARIES}

\subsection{Notation}

$\N$ is the set of natural numbers and $\N_0:=\N\cup\{0\}$.
$\R^n$ and $\s^n$ are the $n$-dimensional Euclidean space and space of $n\times n$ real symmetric matrices.
For $x\in\R^n$, let $x^T$ denote transposed $x$ and $x_i\in\R$ is the $i$-th component of $x$.
$\|\cdot\|_1$ is a norm on $\R^n$, defined as $\|x\|_1:=\sum_{i=1}^n|x_i|.$
For $X\in\s^n$, $X\leq0$ means that $X$ is negative semidefinite.
The symbol $*$ will denote the symmetric elements of a symmetric matrix.

For $\Omega\subseteq\R^n$ and $f:\Omega\to\R$ let $f(x)$ stand for $f(x_1,...,x_n)$ and $\int_\Omega f(x)\,dx$ represent an integral of $f$ over $\Omega$ with $dx:=dx_1dx_2...dx_n$.

Let $\N_0^n:=\{\alpha\in\R^n:\alpha_i\in\N_0\}$.
A vector $\alpha\in\N_0^n$ is called multi-index. For $l\in\N$ define the set
\[
Q_l^n:=\{\alpha\in\N_0^n : \|\alpha\|_1\leq l\}.\numberthis{\label{Q_l^n}}
\]
For $\alpha\in\N_0^n$, $x\in\R^n$ and $g:\R^n\rightarrow\R$ partial derivative
\vspace{-.05in}
\[
D^{\alpha}[g(x)]:=\frac{\partial^{\alpha}}{\partial x^{\alpha}}[g(x)]=\prod_{i=1}^{n}\frac{\partial^{\alpha_i}}{\partial x^{\alpha_i}_i}[g(x)].\numberthis{\label{parder}}\vspace{-.05in}
\]
Note that $\frac{\partial^0}{\partial x^0_i}[g(x)]=g(x)$ for any $i\in\{1,...,n\}$.
We will also use classical notation, $u_{x_1 x_2}(t,x):=\frac{\partial}{\partial x_2}[\frac{\partial}{\partial x_1}[u(t,x)]]$.

If for a function $f:\Omega\to\R$ and some $\alpha\in\N_0^n$ derivative $D^\alpha[f(x)]$ exists for all $x\in\Omega$, there exists $g:\Omega\to\R$ such that $g(x)=D^\alpha[f(x)]$ for all $x\in\Omega$.
For brevity $D^\alpha[f]:=g$.

Further we use $W^{2,2}(\Omega)$. It is one of Banach spaces $W^{k,p}(\Omega)$ which denote Sobolev spaces of functions $u:\Omega\rightarrow\R$ with $D^{\alpha}[u]\in L_p(\Omega)$ for all $\alpha\in Q^n_k$, where $Q^n_k$ is defined as in (\ref{Q_l^n}) and norm\vspace{-.05in}
\[
\Vert u\Vert_{k,p}:=\sum_{\|\alpha\|_1\leq k}\Vert D^{\alpha}[u]\Vert_{L_p},\vspace{-.075in}
\]
where $L_p(\Omega)$ stands for the space of Lebesgue-measurable functions $g:\Omega\rightarrow\R$ with norm, for $p\in\N$\vspace{-.05in}
\[
\Vert g\Vert_{L_p}:=\Big(\int_{\Omega}|g(s)|^{p}ds\Big)^{1/p}\vspace{-.05in}
\]
and $\Vert g\Vert_{L_\infty}:=\sup_{s\in\Omega}|g(s)|$.

It is known that for continuous functions $u:[0,\infty)\to W^{2,2}(\Omega)$ and $V:W^{2,2}(\Omega)\to\R$ their composition $(V\circ u):[0,\infty)\to\R$ is also continuous and the upper right-hand derivative $D^+_t V(u(t))$ is defined by
\[
D^+ [V(u(t))]:=\limsup_{h\to 0^{+}}\frac{V(u(t+h))-V(u(t))}{h}.
\]
Note that if $v:[0,\infty)\to\R$ is differentiable at $t\in(0,\infty)$ then $D^+[v(t)]=\frac{d}{dt}[v(t)]$.

\subsection{Polynomials}

For a multi-index $\alpha\in\mathbb{N}_0^n$ and $x\in\mathbb{R}^n$, let $x^\alpha:=\prod_{i=1}^nx_i^{\alpha_i}=x_1^{\alpha_1} x_2^{\alpha_2}...\,x_n^{\alpha_n}.$
Then $x^\alpha$ is a monomial of degree $\|\alpha\|_1\in\mathbb{N}_0$.
A polynomial is a finite linear combination of monomials $p(x):=\sum_\alpha p_\alpha x^\alpha$, where the summation is applied over a given finite set of multi-indexes $\alpha$ and $p_\alpha\in\R$ denotes the corresponding coefficient.
Let $\mathbb{R}[x]$ stand for polynomial ring in $n$ variables $x_1,...,x_n$ with coefficients in $\mathbb{R}$.
The degree of a polynomial $p\in\mathbb{R}[x]$ is the largest degree among all monomials, and is denoted by $\text{deg}(p)\in\mathbb{N}_0$.

A polynomial $p\in\mathbb{R}[x]$ is called Sum of Squares (SOS), if there is a finite number of polynomials $z_i\in\mathbb{R}[x]$ such that for all $x\in\mathbb{R}^n$, $p(x)=\sum_i z_i(x)^2$.
We denote the set of sum of square polynomials in variables $x$ by $\sum[x]$.
If $p\in\sum[x]$, then $p(x)\geq0$ for all $x\in\mathbb{R}^n$.

Polynomial matrices and SOS matrices are defined in a similar manner (See, e.g.~\cite{c25}).
We use $\sum[\s(x)]$ to denote the set of real symmetric SOS polynomial matrices. If $M\in\sum[\s^m(x)]$ then for all $x\in\R^n$, $M(x)\in\s^m$ and $M(x)\geq0$.

\vspace*{-0.05in}
\subsection{Comparison Lemma}

Recall the comparison principle from, e.g.~\cite{c26}.
\begin{mylem}
Consider the scalar differential equation $\frac{d}{dt}[u(t)]=f(t,u(t))$, $u(t_0)=u_0$, where $f(t,x)$ is continuous in $t$ and locally Lipschitz in $x$, for all $t\geq0$ and all $x\in J\subset\mathbb{R}$.
Let $[t_0,T)$ ($T$ could be infinity) be the maximal interval of existence of the solution $u$, and suppose $u(t)\in J$ for all $t\in[t_0,T)$.
Let $v$ be a continuous function whose upper right-hand derivative $D^+[v(t)]$ satisfies \vspace*{-0.05in}
\[
D^+[v(t)]\leq f(t,v(t)),\ \ v(t_0)\leq u_0\vspace*{-0.05in}
\]
with $v(t)\in J$ for all $t\in[t_0,T)$. Then, $v(t)\leq u(t)$ for all $t\in [t_0,T)$.
\end{mylem}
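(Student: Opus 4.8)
The plan is to use the standard textbook argument (as in~\cite{c26}): I would first prove the inequality against a strictly perturbed version of the comparison equation, where the strictness makes a ``first-crossing'' argument work, and then recover the claim by letting the perturbation go to zero. Concretely, for $\varepsilon>0$ I would let $u_\varepsilon$ denote the solution of $\frac{d}{dt}[u_\varepsilon(t)]=f(t,u_\varepsilon(t))+\varepsilon$ with $u_\varepsilon(t_0)=u_0+\varepsilon$. Since $f$ is continuous in $t$ and locally Lipschitz in $x$, this perturbed equation has a unique solution on some interval $[t_0,T_\varepsilon)$; and by continuous dependence of solutions on initial data and parameters, for every fixed $t_1\in[t_0,T)$ one has $t_1<T_\varepsilon$ for all sufficiently small $\varepsilon$, with $u_\varepsilon\to u$ uniformly on $[t_0,t_1]$ as $\varepsilon\to0^+$.

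The heart of the proof is then to show $v(t)\le u_\varepsilon(t)$ on the common interval of existence. Fix $t_1$ and $\varepsilon$ as above and set $g:=v-u_\varepsilon$ on $[t_0,t_1]$; this is continuous and $g(t_0)=v(t_0)-u_0-\varepsilon\le-\varepsilon<0$. The key observation is that at any point $t$ with $g(t)=0$, i.e.\ $v(t)=u_\varepsilon(t)$, the differentiability of $u_\varepsilon$ and the hypothesis on $D^+[v]$ give
\[
D^+[g(t)]=D^+[v(t)]-\tfrac{d}{dt}[u_\varepsilon(t)]\le f(t,v(t))-f(t,u_\varepsilon(t))-\varepsilon=-\varepsilon<0 .
\]
Now I would argue by contradiction: if $g(b)>0$ for some $b\in(t_0,t_1]$, put $t^\ast:=\sup\{t\in[t_0,b]:g(t)\le0\}$. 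This set is nonempty (it contains $t_0$) and closed, so $g(t^\ast)\le0$; and $g(t)>0$ for every $t\in(t^\ast,b]$, which forces $g(t^\ast)\ge0$ and $t^\ast<b$. Hence $g(t^\ast)=0$, so that
\[
D^+[g(t^\ast)]=\limsup_{h\to0^+}\frac{g(t^\ast+h)-g(t^\ast)}{h}=\limsup_{h\to0^+}\frac{g(t^\ast+h)}{h}\ge0 ,
\]
since $g(t^\ast+h)>0$ for all small $h>0$. This contradicts the displayed strict inequality applied at $t=t^\ast$, so $g\le0$ on $[t_0,t_1]$, i.e.\ $v(t)\le u_\varepsilon(t)$ for all $t\in[t_0,t_1]$.

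Finally I would pass to the limit: given any $t\in[t_0,T)$, pick $\varepsilon$ small enough that $u_\varepsilon(t)$ is defined; then $v(t)\le u_\varepsilon(t)\to u(t)$ as $\varepsilon\to0^+$, hence $v(t)\le u(t)$, and since $t\in[t_0,T)$ was arbitrary this is the claim. The main obstacle — and the reason the $+\varepsilon$ perturbation is not optional — is the one-sidedness of the Dini derivative: the hypothesis controls only $D^+[v]$, which describes $v$ only to the \emph{right} of a point, so a naive argument with $g=v-u$ fails because at a zero of $v-u$ one obtains only $D^+[g]\le0$, not $<0$, and $g$ could touch zero from below and re-emerge positive. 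Introducing $+\varepsilon$ is exactly what upgrades this to the strict inequality needed for the first-crossing step. The remaining care is routine continuous-dependence bookkeeping, in particular checking that $u_\varepsilon$ stays in $J$ (where $f$ is defined and Lipschitz) on $[t_0,t_1]$, which follows from $u([t_0,t_1])\subset J$ together with the uniform convergence $u_\varepsilon\to u$.
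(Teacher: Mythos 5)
The paper does not prove this lemma at all --- it simply recalls it from the cited reference \cite{c26} (Khalil) --- so there is no in-paper argument to diverge from. Your proof is correct and is precisely the standard argument from that reference: the $\varepsilon$-perturbed comparison system, the first-crossing contradiction using the strict inequality $D^+[g(t^\ast)]\le-\varepsilon<0$ at a zero of $g=v-u_\varepsilon$, and the passage to the limit via continuous dependence; your closing remark correctly identifies why the perturbation is indispensable given that only the one-sided Dini derivative of $v$ is controlled.
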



\section{LYAPUNOV STABILITY FOR PARABOLIC PDE}

First consider the following general form of parabolic PDEs. For all $t\in(0,\infty)$ and $x\in\Omega\subset\mathbb{R}^n$,
\[
\hspace*{-0.2cm}u_t(t,x)=F(t,x,D^{\alpha(1)}\hspace*{-0.1cm}\left[u(t,x)\right],...,D^{\alpha(k)}\hspace*{-0.1cm}\left[u(t,x)\right]), \numberthis \label{th_pde}
\]
where $u:[0,\infty)\times\Omega\to\mathbb{R}$ and for $i=1,...,k$, $D^{\alpha(i)}\hspace*{-0.1cm}\left[u(t,x)\right]$ are partial derivatives as denoted in (\ref{parder}). Assume that solutions to (\ref{th_pde}) exist, are unique and depend continuously on initial conditions.
This implies that the function  is continuously differentiable in $t$ and for each $t\in[0,\infty)$, $u(t,\cdot)\in W^{2,2}(\Omega)$.

\begin{mythm}
\label{th1}
Let there exist continuous $V:L_2(\Omega)\to\R$, $l,m\in\N$ and $b,a>0$  such that
\[
a\|v\|_{L_2}^l\leq V(v)\leq b\|v\|_{L_2}^m,\numberthis{\label{th_V}}\vspace*{-0.05in}
\]
for all $v\in L_2(\Omega)$.
Furthermore, suppose that there exists $c\geq0$ such that for all $t\geq0$ the upper right-hand derivative
\begin{equation}
\label{th_dV}
D^+\left[V(u(t,\cdot))\right]\leq -c\| u(t,\cdot)\|_{L_2}^m,\vspace*{-0.05in}
\end{equation}
where $u$ satisfies (\ref{th_pde}).
Then\vspace*{-0.05in}
\[
\|u(t,\cdot)\|_{L_2}\leq\sqrt[l]{\frac{b}{a}}\|u(0,\cdot)\|_{L_2}^{m/l}\exp\left\{ -\frac{c}{lb}t\right\}\ \text{for all}\ t\geq 0.
\]
\end{mythm}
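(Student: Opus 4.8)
The plan is to reduce the statement to a scalar differential inequality in the single variable $v(t):=V(u(t,\cdot))$ and then invoke the comparison lemma. First I would observe that $v:[0,\infty)\to\R$ is continuous, since $u:[0,\infty)\to W^{2,2}(\Omega)$ is continuous (by the standing assumption on the PDE), $W^{2,2}(\Omega)\subset L_2(\Omega)$, and $V:L_2(\Omega)\to\R$ is continuous; moreover the lower bound $a\|v\|_{L_2}^l\le V(v)$ forces $v(t)\ge 0$ for all $t$.

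Next I would convert the decay hypothesis \eqref{th_dV} into a closed inequality in $v$. From the right-hand inequality in \eqref{th_V} applied at the state $u(t,\cdot)$ we get $\|u(t,\cdot)\|_{L_2}^m\ge V(u(t,\cdot))/b=v(t)/b$, and since $c\ge 0$ multiplying by $-c$ reverses the inequality, so \eqref{th_dV} yields
\[
D^+[v(t)]\le -c\,\|u(t,\cdot)\|_{L_2}^m\le -\tfrac{c}{b}\,v(t)\qquad\text{for all }t\ge 0.
\]
Now I would apply the Comparison Lemma with $f(t,x):=-\tfrac{c}{b}x$, which is continuous in $t$ and (globally, hence locally) Lipschitz in $x$ on $J=\R$, and with the comparison ODE $\frac{d}{dt}[w(t)]=-\tfrac{c}{b}w(t)$, $w(0)=v(0)$, whose solution $w(t)=v(0)\exp\{-\tfrac{c}{b}t\}$ exists on $[0,\infty)$. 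The lemma gives $v(t)\le v(0)\exp\{-\tfrac{c}{b}t\}$ for all $t\ge 0$.

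Finally I would unwind the bounds on $V$: the left inequality in \eqref{th_V} at $u(t,\cdot)$ gives $a\|u(t,\cdot)\|_{L_2}^l\le v(t)$, while the right inequality at $u(0,\cdot)$ gives $v(0)\le b\|u(0,\cdot)\|_{L_2}^m$. Chaining these with the comparison estimate yields $a\|u(t,\cdot)\|_{L_2}^l\le b\,\|u(0,\cdot)\|_{L_2}^m\exp\{-\tfrac{c}{b}t\}$, and taking the (monotone) $l$-th root of both sides produces exactly
\[
\|u(t,\cdot)\|_{L_2}\le\sqrt[l]{\tfrac{b}{a}}\,\|u(0,\cdot)\|_{L_2}^{m/l}\exp\Big\{-\tfrac{c}{lb}t\Big\}.
\]
The argument is essentially routine; the only point requiring care is checking that the hypotheses of the Comparison Lemma genuinely hold here — namely that $v$ is continuous, that $v(t)$ stays in the interval $J=\R$ on which $f$ is locally Lipschitz (automatic), and that the differential inequality for $D^+[v(t)]$ is valid for every $t\ge 0$ rather than almost everywhere, which is why the statement is phrased in terms of the upper right-hand Dini derivative in the first place.
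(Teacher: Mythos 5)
Your proposal is correct and follows essentially the same route as the paper's own proof: both convert the hypothesis into the closed differential inequality $D^+[V(u(t,\cdot))]\le-\tfrac{c}{b}V(u(t,\cdot))$ via the upper bound in \eqref{th_V}, invoke the Comparison Lemma against the linear ODE $\dot\phi=-\tfrac{c}{b}\phi$, and then chain the two-sided bounds before taking the $l$-th root. Your added verification that $v(t):=V(u(t,\cdot))$ is continuous and that $f(t,x)=-\tfrac{c}{b}x$ is globally Lipschitz is a welcome bit of extra care that the paper leaves implicit.
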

\begin{proof}
Let conditions of Theorem \ref{th1} be satisfied.
Since $W^{2,2}(\Omega)\subset L_2(\Omega)$, from (\ref{th_V}) it follows that for each $t\geq0$
\[
a\|u(t,\cdot)\|_{L_2}^l\leq V(u(t,\cdot))\leq b\|u(t,\cdot)\|_{L_2}^m.\numberthis{\label{th_ineq_1}}\vspace*{-0.05in}
\]
Dividing both sides of the second inequality in (\ref{th_ineq_1}) by $b$ results in\vspace*{-0.05in}
\[
\frac{1}{b}V(u(t,\cdot))\leq\|u(t,\cdot)\|_{L_2}^m\numberthis{\label{th_ineq_2}}.\vspace*{-0.05in}
\]
After multiplying both sides of (\ref{th_ineq_2}) by $-c$, we have\vspace*{-0.05in}
\[
-c\|u(t,\cdot)\|_{L_2}^m\leq-\frac{c}{b}V(u(t,\cdot)).\numberthis{\label{th_ineq_3}}\vspace*{-0.05in}
\]
From (\ref{th_dV}) and (\ref{th_ineq_3}) it follows that\vspace*{-0.05in}
\[
D^+\left[V(u(t,\cdot))\right]\leq-\frac{c}{b}V(u(t,\cdot)).\numberthis{\label{th_ineq_4}}\vspace*{-0.05in}
\]
To use the comparison principle, consider the ODE
\[
\frac{d}{dt}\left[\phi(t)\right]=-\frac{c}{b}\phi(t),\quad \phi(0)=V(u(0,\cdot)), \numberthis{\label{th_ode}}
\]
where $t\in(0,\infty)$ and function $\phi:[0,\infty)\to\mathbb{R}$ is continuous.
The well-known solution for (\ref{th_ode}) is
\[
\phi(t)=V(u(0,\cdot))\exp\left\{-\frac{c}{b}t\right\}
\]
for all $t\geq0$. Applying Lemma 1 for (\ref{th_ineq_4}) and (\ref{th_ode}) results in
\[
V(u(t,\cdot))\leq V(u(0,\cdot))\exp\left\{-\frac{c}{b}t\right\}\numberthis{\label{th_ineq_5}}
\]
for all $t\geq0$. Substituting $t=0$ in the second inequality of (\ref{th_ineq_1}) implies
\[
V(u(0,\cdot))\leq b\|u(0,\cdot)\|_{L_2}^m.\numberthis{\label{th_ineq_6}}
\]
Combining the first inequality of (\ref{th_ineq_1}) with (\ref{th_ineq_6}) and (\ref{th_ineq_5}) gives
\begin{align*}
a\|u(t,\cdot)\|_{L_2}^l&\leq V(u(t,\cdot))\leq V(u(0,\cdot))\exp\left\{-\frac{c}{b}t\right\}\\
&\leq b\|u(0,\cdot)\|_{L_2}^m\exp\left\{-\frac{c}{b}t\right\}\numberthis{\label{th_ineq_chain1}}.
\end{align*}
Dividing (\ref{th_ineq_chain1}) by $a$ and taking the $l^{th}$ root results in
\[
\|u(t,\cdot)\|_{L_2}\leq\sqrt[l]{\frac{b}{a}}\|u(0,\cdot)\|_{L_2}^{m/l}\exp\left\{-\frac{c}{lb}t\right\}\ \text{for all}\ t\geq 0.
\]
\end{proof}
\vspace*{-0.2in}
\section{STABILITY TEST EXPRESSED AS\\  OPTIMIZATION OVER POLYNOMIALS}
\label{Sec_POP}

In this paper, we focus on PDEs of the following form. For all $t>0$ and $x\in\Omega:=(0,1)^2$,\vspace{-0.05in}
\begin{align*}
u_{t}(t,x)&=a(x)u_{x_1x_1}(t,x)+b(x)u_{x_1x_2}(t,x)\\
&\quad +c(x)u_{x_2x_2}(t,x)+d(x)u_{x_1}(t,x)\\
&\quad +e(x)u_{x_2}(t,x)+f(x)u(t,x),\numberthis\label{pde1}\vspace{-0.05in}
\end{align*}
where $a,b,c,d,e,f\in\mathbb{R}[x]$. Assume that solution to (\ref{pde1}) exists, is unique and depends continuously on initial conditions.
For each $t\geq0$ suppose $u(t,\cdot)\in W^{2,2}(\Omega)$. Furthermore, let $u$ satisfy zero Dirichlet boundary conditions, i.e.
\begin{align*}
\hspace*{-0.2cm} u(t,1,x_2)=u(t,0,x_2)=u(t,x_1,1)=u(t,x_1,0)=\hspace*{-0.05cm} 0\numberthis\label{bc}
\end{align*}
for all $x_1,x_2\in[0,1]$. Define $V:L_2(\Omega)\to\mathbb{R}$ as
\[
V(v):=\int_\Omega s(x)v(x)^2\,dx,\numberthis\label{V}
\]
where $s\in\mathbb{R}[x]$.
Using $u(t,\cdot)$ for $v$ in (\ref{V}) and differentiating the result with respect to $t$ gives
\begin{align*}
\frac{d}{dt}\left[V(u(t,\cdot))\right]&=\frac{d}{dt}\left[\int_\Omega s(x)u(t,x)^2\,dx\right]\\
&=\int_\Omega 2s(x)u(t,x)u_t(t,x)\,dx.\numberthis\label{dV_Leibniz}
\end{align*}
Substituting for $u_t(t,x)$ from (\ref{pde1}) into (\ref{dV_Leibniz}) implies
\begin{align*}
\frac{d}{dt}\left[V(u(t,\cdot))\right]\hspace*{-0.02in}&=\int_\Omega 2s(x)u(t,x)\Big(a(x)u_{x_1x_1}(t,x)\ \ \ \ \ \ \ \ \ \ \ \ \\
&\qquad+b(x)u_{x_1x_2}(t,x)+c(x)u_{x_2x_2}(t,x)\\
&\qquad+d(x)u_{x_1}(t,x)+e(x)u_{x_2}(t,x)\\
&\qquad +f(x)u(t,x)\Big)\,dx\\
&=\hspace*{-0.01in}I_1(t)\hspace*{-0.01in}+\hspace*{-0.01in}I_2(t)\hspace*{-0.01in}+\hspace*{-0.01in}I_3(t)\hspace*{-0.01in}+\hspace*{-0.01in}I_4(t)\hspace*{-0.01in}+\hspace*{-0.01in}I_5(t),\numberthis{\label{dV_I}}\vspace*{-0.1in}
\end{align*} 
where\vspace*{-0.1in}
\begin{align*}
I_1(t):=&\int_\Omega 2s(x)u(t,x)a(x)u_{x_1x_1}(t,x)\,dx,\\
I_2(t):=&\int_\Omega s(x)u(t,x)b(x)u_{x_2x_1}(t,x)\,dx,\\
I_3(t):=&\int_\Omega s(x)u(t,x)b(x)u_{x_1x_2}(t,x)\,dx,\\
I_4(t):=&\int_\Omega 2s(x)u(t,x)c(x)u_{x_2x_2}(t,x)\,dx,\\
I_5(t):=&\int_\Omega 2s(x)u(t,x)\Big(d(x)u_{x_1}(t,x)\\
&\quad\ +e(x)u_{x_2}(t,x)+f(x)u(t,x)\Big)\,dx.\vspace*{-0.1in}
\end{align*}
Note that, based on section 5.2.3 of \cite{c27}, we have\vspace*{-0.05in}
\[
u_{x_1x_2}(t,x)=u_{x_2x_1}(t,x)\numberthis{\label{Evans}}\vspace*{-0.05in}
\]
for all $x\in\Omega$. We used property (\ref{Evans}) to define $I_2$ and $I_3$.

Alternatively, $I_5$ can be formulated as
\[
I_5(t)=\int_\Omega U^T(t,x) Z_5(x) U(t,x)\,dx,\numberthis{\label{I5}}
\]
where for all $x\in\Omega$
\[
U^T(t,x):=\left[u(t,x)\ u_{x_1}(t,x)\ u_{x_2}(t,x)\right]\numberthis{\label{U}}\vspace*{-0.05in}
\]
and\vspace*{-0.05in}
\[
Z_5(x):=\begin{bmatrix} 2s(x)f(x) & s(x)d(x) & s(x)e(x)\\
                      * & 0 & 0\\
                      * & * & 0
\end{bmatrix}.\vspace*{-0.05in}
\]
Using integration by parts and boundary conditions (\ref{bc}), $I_1$ can be rewritten as follows.\vspace*{-0.05in}
\begin{align*}
\hspace*{-0.1cm}I_1(t)&=\int_\Omega 2s(x)u(t,x)a(x)\frac{d}{dx_1}[u_{x_1}(t,x)]\,dx\\
&=2\int_0^1\hspace*{-0.1cm}\Big(s(x)u(t,x)a(x)u_{x_1}(t,x)|_{x_1=0}^{x_1=1}\\
&\qquad\ \ \, -\hspace*{-0.1cm}\int_0^1\hspace*{-0.1cm} u_{x_1}(t,x)\frac{d}{dx_1}[s(x)u(t,x)a(x)]\,dx_1\Big)\,dx_2\\
&=-\int_\Omega 2u_{x_1}(t,x)\Big(u(t,x)\frac{d}{dx_1}[s(x)a(x)]\\
&\qquad\ \ \ +s(x)a(x)u_{x_1}(t,x)\Big)\,dx\\
&=-\int_\Omega U^T(t,x) Z_1(x) U(t,x)\,dx,\numberthis{\label{I1}}\vspace*{-0.2in}
\end{align*}
where for all $x\in\Omega$ \vspace*{-0.05in}
\[
Z_1(x):=\begin{bmatrix} 0 & \frac{d}{dx_1}[s(x)a(x)] & 0\\
                      * & 2s(x)a(x) & 0\\
                      * & * & 0
\end{bmatrix}.
\]
Following steps of (\ref{I1}) for $I_2, I_3$ and $I_4$, we get
\begin{align*}
\hspace*{-0.2cm}I_2(t)&=\int_\Omega s(x)u(t,x)b(x)\frac{d}{dx_1}[u_{x_2}(t,x)]\,dx\\
&=\int_0^1\hspace*{-0.1cm}\Big(s(x)u(t,x)b(x)u_{x_2}(t,x)|_{x_1=0}^{x_1=1}\\
&\qquad\ \ -\int_0^1\hspace*{-0.1cm} u_{x_2}(t,x)\frac{d}{dx_1}[s(x)u(t,x)b(x)]\,dx_1\Big)dx_2\\
&=-\int_\Omega U^T(t,x) Z_2(x) U(t,x)\,dx,
\end{align*}
\begin{align*}
I_3(t)&=\int_\Omega s(x)u(t,x)b(x)\frac{d}{dx_2}[u_{x_1}(t,x)]\,dx\\
&=\int_0^1\Big(s(x)u(t,x)b(x)u_{x_1}(t,x)|_{x_2=0}^{x_2=1}\\
&\qquad\ \ \ -\int_0^1 u_{x_1}(t,x)\frac{d}{dx_2}[s(x)u(t,x)b(x)]dx_2\Big)\,dx_1\\
&=-\int_\Omega U^T(t,x) Z_3(x) U(t,x)\,dx,\\
I_4(t)&=\int_\Omega 2s(x)u(t,x)c(x)\frac{d}{dx_2}[u_{x_2}(t,x)]\,dx\\
&=-\int_\Omega U^T(t,x) Z_4(x) U(t,x)\,dx,\numberthis{\label{I4}}
\end{align*}
where $U$ is defined as in (\ref{U}) and for all $x\in\Omega$
\begin{align*}
Z_2(x)&:=\begin{bmatrix} 0 & 0 & \frac{1}{2}\frac{d}{dx_1}[s(x)b(x)]\\
                      * & 0 & \frac{1}{2}s(x)b(x)\\
                      * & * & 0
\end{bmatrix},\\ 
Z_3(x)&:=\begin{bmatrix} 0 & \frac{1}{2}\frac{d}{dx_2}[s(x)b(x)] & 0\\
                      * & 0 & \frac{1}{2}s(x)b(x)\\
                      * & * & 0
\end{bmatrix},\\ 
Z_4(x)&:=\begin{bmatrix} 0 & 0 & \frac{d}{dx_2}[s(x)c(x)]\\
                      * & 0 & 0\\
                      * & * & 2s(x)c(x)
\end{bmatrix}.
\end{align*}
By combining (\ref{dV_I}), (\ref{I5}), (\ref{I1}) and (\ref{I4}) it follows that
\[
\frac{d}{dt}[V(u(t,\cdot))]=\int_\Omega U^T(t,x) Q(x) U(t,x)\,dx,\numberthis{\label{dV_Q}}
\]
where for all $x\in\Omega$
\[
Q(x):=\begin{bmatrix} 2s(x)f(x) & Q_1(x) & Q_2(x)\\
                      * & -2s(x)a(x) & -s(x)b(x)\\
                      * & * & -2s(x)c(x)
\end{bmatrix}\numberthis{\label{Q}}
\]
with
\begin{align*}
Q_1(x):&=s(x)d(x)-\frac{d}{dx_1}[s(x)a(x)]-\frac{1}{2}\frac{d}{dx_2}[s(x)b(x)],\\
Q_2(x):&=s(x)e(x)-\frac{1}{2}\frac{d}{dx_1}[s(x)b(x)]-\frac{d}{dx_2}[s(x)c(x)].
\end{align*}

\subsection{Spacing functions}

If $Q(x)\leq0$ for all $x\in\Omega$, then the time derivative in (\ref{dV_Q}) is clearly non-positive for all $t>0$.
However, such a condition on $Q$ is conservative.
To decrease that conservatism, we introduce matrix valued functions $\Upsilon_i$ such that
\[
\int_\Omega U^T(t,x)\Upsilon_i(x) U(t,x)\,dx=0
\]
and, therefore, can be added to $Q$ without altering the integral. $\Upsilon_i$ are called spacing functions. We parameterize $\Upsilon_i$ by polynomials $p_i$. The idea came from \cite{c25}.

The following holds for any $p_1\in\mathbb{R}[x]$, because of the boundary conditions (\ref{bc}).
\begin{align*}
\int_\Omega\frac{d}{dx_1}&[u(t,x)p_1(x)u(t,x)]\,dx\\
&\ \ =\int_{0}^{1}\Big(u(t,x)p_1(x)u(t,x)|_{x_1=0}^{x_1=1}\Big)\,dx_2=0\numberthis{\label{Int_p1_0}}.
\end{align*}
Using the chain rule, we have
\begin{align*}
\frac{d}{dx_1}[&u(t,x)p_1(x)u(t,x)]\\
&=u(t,x)\frac{d}{dx_1}[p_1(x)]u(t,x)+2p_1(x)u(t,x)u_{x_1}(t,x)\\
&=U^T(t,x)\Upsilon_1(x)U(t,x)\numberthis{\label{Der_with_Ups_1}},
\end{align*}
where for all $x\in\Omega$
\[
\Upsilon_1(x):=\begin{bmatrix} \frac{d}{dx_1}[p_1(x)] & p_1(x) & 0\\
                      * & 0 & 0\\
                      * & * & 0
\end{bmatrix}.\numberthis{\label{Ups_1}}
\]
Combining (\ref{Int_p1_0}) and (\ref{Der_with_Ups_1}) results in
\[
\int_\Omega U(t,x)^T\Upsilon_1(x)U(t,x)\,dx=0\numberthis{\label{Int_Ups_1}}.
\]
Likewise in (\ref{Int_p1_0}), because of the boundary conditions (\ref{bc}), the following holds for any $p_2\in\mathbb{R}[x]$.
\begin{align*}
\int_\Omega\frac{d}{dx_2}[u&(t,x)p_2(x)u(t,x)]\,dx=0.
\end{align*}
Following steps of (\ref{Der_with_Ups_1}) for $\frac{d}{dx_2}[u(t,x)p_2(x)u(t,x)]$, gives
\[
\Upsilon_2(x):=\begin{bmatrix} \frac{d}{dx_2}[p_2(x)] & 0 & p_2(x)\\
                      * & 0 & 0\\
                      * & * & 0
\end{bmatrix}\numberthis{\label{Ups_2}}
\]
such that
\[
\int_\Omega U(t,x)^T\Upsilon_2(x)U(t,x)\,dx=0\numberthis{\label{Int_Ups_2}}.
\]
Similarly to (\ref{Int_p1_0}), the following is true for any $p_3\in\mathbb{R}[x]$.
\begin{align*}
\int_\Omega \frac{d}{dx_2}[u&(t,x)p_3(x) u_{x_1}(t,x)]\,dx=0\numberthis{\label{Int_p3_0}}.
\end{align*}
Note that the left-hand side of (\ref{Int_p3_0}) can be written as follows.
\begin{align*}
\int_\Omega \frac{d}{dx_2}&\left[u(t,x)p_3(x)u_{x_1}(t,x)\right]\,dx\\
=&\int_\Omega \Big(u_{x_2}(t,x) p_3(x)u_{x_1}(t,x)\\
&\hspace*{0.5cm}+u(t,x)\frac{d}{dx_2}[p_3(x)]u_{x_1}(t,x)\Big)dx\\
&\hspace*{1.5cm}+\int_\Omega u(t,x)p_3(x) u_{x_2x_1}(t,x)\,dx\numberthis{\label{Int_p3_1}},
\end{align*}
where we need property (\ref{Evans}).
Applying integration by parts to the second integral of the right-hand side of the last equation in (\ref{Int_p3_1}) results in
\begin{align*}
\int_\Omega& u(t,x)p_3(x)\frac{d}{dx_1}\left[u_{x_2}(t,x)\right]\,dx\\
=&\int_0^1\Big( u(t,x)p_3(x)u_{x_2}(t,x)|_{x_1=0}^{x_1=1}\\
&-\int_0^1 u_{x_2}(t,x) \frac{d}{dx_1}\left[u(t,x)p_3(x)\right]\,dx_1\Big)\,dx_2\\
=&-\hspace*{-0.1cm}\int_\Omega\hspace*{-0.1cm} u_{x_2}(t,x)\Big(u_{x_1}(t,x)p_3(x)+u(t,x)\frac{d}{dx_1}[p_3(x)]\Big)\,dx.\numberthis{\label{Int_p3_2}}\vspace{-0.6in}
\end{align*}
From (\ref{Int_p3_1}) and (\ref{Int_p3_2}) the following holds.
\begin{align*}
\int_\Omega \frac{d}{dx_2}&[u(t,x)p_3(x)u_{x_1}(t,x)]\,dx\ \ \ \ \ \ \ \ \ \ \ \ \ \ \ \ \ \ \ \ \ \ \ \ \ \ \ \ \ \ \\
=&\int_\Omega \Big(u(t,x)\frac{d}{dx_2}[p_3(x)]u_{x_1}(t,x)\\
&\hspace*{2cm}-u(t,x)\frac{d}{dx_1}[p_3(x)]u_{x_2}(t,x)\Big)dx\\
=&\int_\Omega U(t,x)^T\Upsilon_3(x)U(t,x)\,dx,\numberthis{\label{int_Ups_3}}
\end{align*}
where for all $x\in\Omega$
\[
\Upsilon_3(x):=\begin{bmatrix} 0 & \frac{1}{2}\frac{d}{dx_2}[p_3(x)]  & -\frac{1}{2}\frac{d}{dx_1}[p_3(x)]\\
                     * & 0 & 0\\
                     * & * & 0
\end{bmatrix}.\numberthis{\label{Ups_3}}
\]
Combining (\ref{Int_p3_0}) and (\ref{int_Ups_3}) gives
\[
\int_\Omega U(t,x)^T\Upsilon_3(x)U(t,x)\,dx=0\numberthis{\label{Int_Ups_3}}.
\]
Following steps (\ref{Int_p3_0}) -- (\ref{int_Ups_3}) for $\frac{d}{dx_1}[u(t,x)p_4(x) u_{x_2}(t,x)]$ with any $p_4\in\mathbb{R}[x]$, leads to the following.
\[
\int_\Omega U(t,x)^T\Upsilon_4(x)U(t,x)\,dx=0\numberthis{\label{Int_Ups_4}},
\]
where for all $x\in\Omega$
\[
\Upsilon_4(x):=\begin{bmatrix} 0 & -\frac{1}{2}\frac{d}{dx_2}[p_4(x)]  & \frac{1}{2}\frac{d}{dx_1}[p_4(x)]\\
                     * & 0 & 0\\
                     * & * & 0
\end{bmatrix}\numberthis{\label{Ups_4}}.
\]
From (\ref{dV_Q}), (\ref{Int_Ups_1}), (\ref{Int_Ups_2}), (\ref{Int_Ups_3}) and (\ref{Int_Ups_4}) the following holds.
\[
\hspace*{-0.4cm}\frac{d}{dt}[V(u(t,\cdot))]\hspace*{-0.1cm}=\hspace*{-0.2cm}\int_\Omega\hspace*{-0.1cm} U^T(t,x) \hspace*{-0.1cm}\left(\hspace*{-0.1cm}Q(x)\hspace*{-0.1cm}+\hspace*{-0.1cm}\sum_{i=1}^4\Upsilon_i(x)\hspace*{-0.1cm}\right)\hspace*{-0.1cm} U(t,x)dx.\hspace*{-0.3cm}\numberthis{\label{dV4}}
\]
By substituting for $Q$ and $\Upsilon_i$ from (\ref{Q}), (\ref{Ups_1}), (\ref{Ups_2}), (\ref{Ups_3}) and (\ref{Ups_4}) in (\ref{dV4}) we can define
\[
M=\Phi(a,b,c,d,e,f,s,p_1,p_2,p_3,p_4)\numberthis{\label{M}},
\]
if for all $x\in\Omega$
\[
M(x)=\begin{bmatrix}
M_1(x) & M_2(x) & M_3(x)\\
* & -2s(x)a(x) & -s(x)b(x)\\
* & * & -2s(x)c(x)
\end{bmatrix}\numberthis{\label{matrixM}},
\]
\vspace{-0.3cm}

where
\begin{align*}
M_1(x)&:=2s(x)f(x)+\frac{d}{dx_1}[p_1(x)]+\frac{d}{dx_2}[p_2(x)],\\
M_2(x)&:=s(x)d(x)-\frac{d}{dx_1}[s(x)a(x)]-\frac{1}{2}\frac{d}{dx_2}[s(x)b(x)]\\
&\quad+p_1(x)+\frac{1}{2}\frac{d}{dx_2}[p_3(x)-p_4(x)],\\
M_3(x)&:=s(x)e(x)-\frac{1}{2}\frac{d}{dx_1}[s(x)b(x)]-\frac{d}{dx_2}[s(x)c(x)]\\
&\quad+p_2(x)+\frac{1}{2}\frac{d}{dx_1}[p_4(x)-p_3(x)],\numberthis{\label{elementsofM}}
\end{align*}
\vspace{-0.6cm}

such that
\[
\frac{d}{dt}[V(u(t,\cdot))]=\int_\Omega U^T(t,x) M(x) U(t,x)dx\numberthis{\label{dVM}}.
\]
\begin{mythm}
\label{thpoly}
Suppose that for (\ref{pde1}) there exist $s,p_i\in\mathbb{R}[x]$, for $i=1,...,4$, and $\theta>0$, such that $s(x)\geq\theta$ and $M(x)\leq0$ for all $x\in\Omega$, where $M$ is defined as in (\ref{M}) -- (\ref{elementsofM}). Then (\ref{pde1}) is stable.

\end{mythm}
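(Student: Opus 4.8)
The plan is to deduce Theorem~\ref{thpoly} directly from Theorem~\ref{th1}, applied to the functional $V$ in~(\ref{V}) built from the given $s$, with the choices $l=m=2$ and $c=0$. So the proof reduces to verifying the three hypotheses of Theorem~\ref{th1} for this $V$ --- continuity of $V$ on $L_2(\Omega)$, the two-sided bound~(\ref{th_V}), and the dissipation inequality~(\ref{th_dV}) --- and then reading off the resulting estimate and checking it is exactly the claimed stability.

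For the bounds I would first use $s(x)\ge\theta>0$ on $\Omega$ to get, for every $v\in L_2(\Omega)$,
\[
V(v)=\int_\Omega s(x)v(x)^2\,dx\ \ge\ \theta\int_\Omega v(x)^2\,dx=\theta\|v\|_{L_2}^2,
\]
which is~(\ref{th_V}) with $a=\theta$, $l=2$. For the upper bound, since $s\in\mathbb{R}[x]$ is continuous and $\overline\Omega=[0,1]^2$ is compact, $\bar s:=\max_{x\in\overline\Omega}s(x)<\infty$, so $V(v)\le\bar s\|v\|_{L_2}^2$, i.e.~(\ref{th_V}) with $b=\bar s$, $m=2$. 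The same bound $|s(x)|\le\bar s$ gives continuity (in fact local Lipschitz continuity) of $V$ on $L_2(\Omega)$ through $|V(v)-V(w)|\le\bar s\,\|v-w\|_{L_2}\|v+w\|_{L_2}$ by Cauchy--Schwarz, so $V:L_2(\Omega)\to\R$ is admissible in Theorem~\ref{th1}.

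For the dissipation inequality I would invoke~(\ref{dVM}), already derived in Section~\ref{Sec_POP} using the boundary conditions~(\ref{bc}), integration by parts, and the spacing functions, namely $\frac{d}{dt}[V(u(t,\cdot))]=\int_\Omega U^T(t,x)M(x)U(t,x)\,dx$. Since $M(x)\le0$ for all $x\in\Omega$, the integrand is pointwise nonpositive, so $\frac{d}{dt}[V(u(t,\cdot))]\le0$; because $u$ is continuously differentiable in $t$, $V(u(t,\cdot))$ is differentiable and $D^+[V(u(t,\cdot))]=\frac{d}{dt}[V(u(t,\cdot))]\le0$, which is~(\ref{th_dV}) with $c=0$ and $m=2$. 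Theorem~\ref{th1} then yields $\|u(t,\cdot)\|_{L_2}\le\sqrt{\bar s/\theta}\,\|u(0,\cdot)\|_{L_2}$ for all $t\ge0$; given $\varepsilon>0$, taking $\delta=\varepsilon\sqrt{\theta/\bar s}$ shows that $\|u(0,\cdot)\|_{L_2}<\delta$ forces $\|u(t,\cdot)\|_{L_2}<\varepsilon$ for all $t\ge0$, i.e.\ (\ref{pde1}) is stable in the $L_2$ norm. The genuinely delicate content --- justifying the Leibniz differentiation in~(\ref{dV_Leibniz}) and the integrations by parts, which rely on $u(t,\cdot)\in W^{2,2}(\Omega)$ and~(\ref{bc}) --- has already been handled in Section~\ref{Sec_POP}; within this proof the only points needing care are the compactness argument producing $\bar s$ and the observation that Theorem~\ref{th1} allows $c=0$ (so only boundedness, not decay, is concluded here).
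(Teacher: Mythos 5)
Your proposal is correct and follows essentially the same route as the paper's proof: both verify the hypotheses of Theorem~\ref{th1} with $l=m=2$ and $c=0$, using $s(x)\ge\theta>0$ together with boundedness of $s$ on $\Omega$ for the two-sided bound~(\ref{th_V}) (the paper takes $a=\inf_\Omega s$ and $b=\sup_\Omega s$ where you take $a=\theta$ and $b=\max_{\overline\Omega}s$), and $M(x)\le0$ with~(\ref{dVM}) for the derivative condition. Your added checks (continuity of $V$ on $L_2(\Omega)$ and the explicit $\varepsilon$--$\delta$ conclusion of stability) are harmless elaborations of details the paper leaves implicit.
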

\begin{proof}
If conditions of Theorem~\ref{thpoly} are satisfied, let
\[
a=\inf_{x\in\Omega}\{ s(x)\},\ b=\sup_{x\in\Omega} \{s(x)\}.\numberthis{\label{ab}}
\]
Since $s(x)\geq\theta>0$ for all $x\in\Omega$, then $b,a>0$ and the following holds for all $v\in L_2(\Omega)$.
\begin{align*}
a\|v\|^2_{L_2}&=\inf_{x\in\Omega} \{s(x)\}\int_\Omega v^2(x)\,dx\leq\int_\Omega s(x)v^2(x)\,dx\\
&\leq \sup_{x\in\Omega} \{s(x)\}\int_\Omega v^2(x)\,dx=b\|v\|^2_{L_2}.\numberthis{\label{ineqthplo}}
\end{align*}
Using (\ref{V}) we have $a\|v\|^2_{L_2}\leq V(v)\leq b\|v\|^2_{L_2}$.
Since $M(x)\leq 0$, from (\ref{dVM}) it follows that $\frac{d}{dt}[V(u(t,\cdot))]\leq0$ for all $t>0$. Theorem \ref{th1} with $a,b$, defined as in (\ref{ab}), $m=l=2$ and $c=0$ ensures stability of (\ref{pde1}).
\end{proof}

\begin{mythm}
\label{thpoly2}
Suppose that for (\ref{pde1}) there exist $\theta,\gamma>0$ and $s,p_i\in\mathbb{R}[x]$, for $i=1,...,4$, such that $s(x)\geq\theta$ and $M(x)+\gamma S(x)\leq0$ for all $x\in\Omega$, where $M$ is defined as in (\ref{M}) -- (\ref{elementsofM}) and
 \[
S(x):=\begin{bmatrix} s(x) & 0& 0\\
                         *&0&0\\
                         *&*&0
      \end{bmatrix}\numberthis{\label{matrix_S}}.
\]
Then for all $t>0$ solution to (\ref{pde1}) satisfies
\[
\|u(t,\cdot)\|_{L_2}\leq \sqrt{\frac{b}{a}}\|u(0,\cdot)\|_{L_2}\exp\{-\frac{\gamma}{2} t\},\numberthis{\label{solthpoly2}}
\]
where $a,b$ are defined as in (\ref{ab}).
\end{mythm}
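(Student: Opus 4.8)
The plan is to follow the proof of Theorem~\ref{thpoly} to recover the two-sided bound on $V$, and then to re-run the ODE-comparison part of the proof of Theorem~\ref{th1} from~(\ref{th_ineq_4}) onward with the decay constant $c/b$ replaced by $\gamma$. First I would set $a=\inf_{x\in\Omega}\{s(x)\}$ and $b=\sup_{x\in\Omega}\{s(x)\}$ exactly as in~(\ref{ab}); since $s(x)\geq\theta>0$ on $\Omega$, both are positive and $0<a\leq b$, and repeating the chain~(\ref{ineqthplo}) together with~(\ref{V}) gives $a\|v\|_{L_2}^2\leq V(v)\leq b\|v\|_{L_2}^2$ for every $v\in L_2(\Omega)$.

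Next I would exploit the one genuinely new ingredient: the matrix $S$ of~(\ref{matrix_S}) reproduces the Lyapunov functional as a quadratic form in $U$. All entries of $S(x)$ vanish except the $(1,1)$ entry $s(x)$, so with $U$ as in~(\ref{U}) we have $U^T(t,x)S(x)U(t,x)=s(x)u(t,x)^2$, and hence, by~(\ref{V}),
\[
\int_\Omega U^T(t,x)S(x)U(t,x)\,dx=V(u(t,\cdot)).
\]
The hypothesis $M(x)+\gamma S(x)\leq0$ for all $x\in\Omega$ makes the integrand $U^T(t,x)\big(M(x)+\gamma S(x)\big)U(t,x)$ non-positive at every $x\in\Omega$, so integrating over $\Omega$ and using~(\ref{dVM}) together with the identity just displayed gives, for all $t>0$,
\[
\frac{d}{dt}\big[V(u(t,\cdot))\big]+\gamma V(u(t,\cdot))\leq0,
\]
that is, $D^+[V(u(t,\cdot))]=\frac{d}{dt}[V(u(t,\cdot))]\leq-\gamma V(u(t,\cdot))$, the two derivatives agreeing because $u$ is continuously differentiable in $t$.

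From here the argument is the ODE-comparison step of the proof of Theorem~\ref{th1} verbatim, with $c/b$ replaced by $\gamma$: I would apply Lemma~1 to the scalar equation $\frac{d}{dt}[\phi(t)]=-\gamma\phi(t)$, $\phi(0)=V(u(0,\cdot))$, whose solution is $\phi(t)=V(u(0,\cdot))\exp\{-\gamma t\}$, to obtain $V(u(t,\cdot))\leq V(u(0,\cdot))\exp\{-\gamma t\}$ for all $t\geq0$. Combining this with $a\|u(t,\cdot)\|_{L_2}^2\leq V(u(t,\cdot))$ and $V(u(0,\cdot))\leq b\|u(0,\cdot)\|_{L_2}^2$ from the first paragraph yields $a\|u(t,\cdot)\|_{L_2}^2\leq b\|u(0,\cdot)\|_{L_2}^2\exp\{-\gamma t\}$, and dividing by $a$ and taking square roots produces exactly~(\ref{solthpoly2}).

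There is no serious obstacle here; the only point requiring care is that one should \emph{not} invoke Theorem~\ref{th1} as a black box via the chain $-\gamma V(u(t,\cdot))\leq-\gamma a\|u(t,\cdot)\|_{L_2}^2$. Doing so would verify~(\ref{th_dV}) only with $c=\gamma a$, and Theorem~\ref{th1} would then deliver the decay rate $\gamma a/(2b)$, which is strictly worse than the claimed $\gamma/2$ whenever $s$ is non-constant. Keeping the estimate expressed in terms of $V$ throughout the comparison step — rather than passing to $\|u(t,\cdot)\|_{L_2}$ before applying Lemma~1 — is precisely what eliminates the spurious factor $a/b$ and yields the sharp exponent $\gamma/2$.
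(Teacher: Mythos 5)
Your proof is correct and follows essentially the same route as the paper: establish the two-sided bound $a\|v\|_{L_2}^2\leq V(v)\leq b\|v\|_{L_2}^2$ via (\ref{ineqthplo}), observe that $\int_\Omega U^TSU\,dx=V(u(t,\cdot))$, derive $\frac{d}{dt}[V(u(t,\cdot))]\leq-\gamma V(u(t,\cdot))$ from $M+\gamma S\leq0$ and (\ref{dVM}), and then enter the proof of Theorem~\ref{th1} at the differential inequality (\ref{th_ineq_4}) with $c/b$ replaced by $\gamma$. Your closing caution about not invoking Theorem~\ref{th1} as a black box (which would only give the weaker rate $\gamma a/(2b)$) is exactly the point the paper handles by writing ``using proof of Theorem~\ref{th1} with $c/b=\gamma$'' rather than citing the theorem statement itself.
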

\begin{proof}
\hspace*{-0.05in} Under the assumptions of Theorem \ref{thpoly2}, (\ref{ineqthplo})~holds. With (\ref{matrix_S}) and (\ref{U}) we can write
\[
V(u(t,\cdot))=\int_\Omega U(t,x)^T S(x)U(t,x)\,dx\numberthis{\label{V2}}.
\]
Since $M(x)+\gamma S(x)\leq0$ for all $x\in\Omega$, it holds that
\[
\int_\Omega U^T(t,x) (M(x)+\gamma S(x)) U(t,x)\,dx\leq0.\numberthis{\label{ineq_M_S}}
\]
Since $\gamma$ is a scalar, (\ref{ineq_M_S}) can be easily satisfied as follows.
\begin{align*}
\int_\Omega\hspace*{-0.1cm} U^T(t,x) M(x) U(t,x)dx\leq\hspace*{-0.1cm}-\gamma\hspace*{-0.1cm}\int_\Omega\hspace*{-0.1cm} U(t,x)^T S(x)U(t,x)dx,
\end{align*}
which with (\ref{dVM}) and (\ref{V2}) provides $\frac{d}{dt}[V(u(t,\cdot))]\leq-\gamma V(u(t,\cdot))$.
Using proof of Theorem \ref{th1} with $c/b=\gamma$, results in (\ref{solthpoly2}).
\end{proof}\vspace{-0.01in}

\section{LYAPUNOV STABILITY IN TERMS OF\\ SOS OPTIMIZATION}

\label{Sec_SOS}
Solving optimization over polynomials is computationally hard.
Thus, in this section we present SOS programming alternatives, whose solutions yield solutions to problems in Theorems \ref{thpoly} and \ref{thpoly2}  and, therefore, provide sufficient conditions for stability of System (\ref{pde1}).

\begin{mythm}
\label{sosth}
Suppose that for (\ref{pde1}) there exist $s,p_i\in\R[x]$ for $i=1,...,4$, $\theta>0$, $n_1,n_2\in\Sigma[x]$ and $Q_1,Q_2,Q_3\in\Sigma[\s^3(x)]$ such that for all $x_1,x_2\in(0,1)$
\begin{align*}
s(x)=&\,\theta+x_1(1-x_1)n_1(x)+x_2(1-x_2)n_2(x),\\
M(x)=&-Q_1(x)-x_1(1-x_1)Q_2(x)\\
&-x_2(1-x_2)Q_3(x),\numberthis{\label{SOSineq1}}
\end{align*}
where $M$ is defined as in (\ref{M}) -- (\ref{elementsofM}).
Then (\ref{pde1}) is stable.
\end{mythm}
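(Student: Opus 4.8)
The plan is to deduce Theorem~\ref{sosth} directly from Theorem~\ref{thpoly}: the two identities in the hypothesis are weighted sum-of-squares certificates of the Putinar/Positivstellensatz type, written relative to the constraints $x_1(1-x_1)\geq 0$ and $x_2(1-x_2)\geq 0$ that cut out $\overline{\Omega}=[0,1]^2$, and all that is needed is to read off from them the pointwise conditions $s(x)\geq\theta$ and $M(x)\leq 0$ on $\Omega$ required by Theorem~\ref{thpoly}. First I would record the elementary fact that for every $x=(x_1,x_2)\in\Omega=(0,1)^2$ both $x_1(1-x_1)\geq 0$ and $x_2(1-x_2)\geq 0$ hold (indeed strictly, but nonnegativity is all that is used).

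For the scalar bound, since $n_1,n_2\in\Sigma[x]$ we have $n_1(x)\geq 0$ and $n_2(x)\geq 0$ for all $x\in\R^2$; substituting these into the prescribed form $s(x)=\theta+x_1(1-x_1)n_1(x)+x_2(1-x_2)n_2(x)$ and using the sign of the two weights on $\Omega$ gives $s(x)\geq\theta$ for all $x\in\Omega$. For the matrix bound, since $Q_1,Q_2,Q_3\in\Sigma[\s^3(x)]$, each $Q_i(x)$ is a symmetric positive semidefinite matrix for every $x\in\R^2$; because the product of a nonnegative scalar and a positive semidefinite matrix is again positive semidefinite, we get $x_1(1-x_1)Q_2(x)\geq 0$ and $x_2(1-x_2)Q_3(x)\geq 0$ on $\Omega$. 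Hence in~\eqref{SOSineq1} the matrix $M(x)=-Q_1(x)-x_1(1-x_1)Q_2(x)-x_2(1-x_2)Q_3(x)$ is the negative of a sum of three positive semidefinite matrices, so $M(x)\leq 0$ for all $x\in\Omega$.

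With $s(x)\geq\theta>0$ and $M(x)\leq 0$ on $\Omega$ established for the given $s$ and $p_1,\dots,p_4$, the hypotheses of Theorem~\ref{thpoly} are met, and stability of~\eqref{pde1} follows at once. I do not expect a genuine obstacle here, since the substantive analytic content lies in Theorems~\ref{th1} and~\ref{thpoly}; the only points needing a line of care are the standard scalar-times-PSD-is-PSD fact and the observation that $M$, as assembled from the polynomials $s,a,\dots,f,p_1,\dots,p_4$ through~\eqref{M}--\eqref{elementsofM} by multiplication and differentiation, is itself a polynomial matrix, so that the SOS-matrix identity is well posed.
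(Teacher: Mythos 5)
Your proposal is correct and follows exactly the paper's route: the paper's own proof simply observes that \eqref{SOSineq1} ``clearly'' implies $s(x)\geq\theta$ and $M(x)\leq 0$ on $\Omega$ and then invokes Theorem~\ref{thpoly}. You have merely spelled out the nonnegativity of the weights $x_1(1-x_1)$, $x_2(1-x_2)$ and the SOS/SOS-matrix objects that the paper leaves implicit.
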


\begin{proof}
If (\ref{SOSineq1}) holds, then clearly $s(x)\geq\theta$ and $M(x)\leq~0$ for all $x\in\Omega$.
Using Theorem \ref{thpoly} provides stability of (\ref{pde1}).
\end{proof}

\begin{mythm}
\label{sosth2}
Suppose that for (\ref{pde1}) there exist $\theta,\gamma>0$, $s,p_i\in\R[x]$ for $i=1,...,4$, $n_1,n_2\in\Sigma[x]$ and $Q_4,Q_5,Q_6\in\Sigma[\s^3(x)]$ such that for all $x_1,x_2\in(0,1)$
\begin{align*}
s(x)=&\,\theta+x_1(1-x_1)n_1(x)+x_2(1-x_2)n_2(x),\\
M(x)+\gamma S(x)=&-Q_4(x)-x_1(1-x_1)Q_5(x)\\
&-x_2(1-x_2)Q_6(x),\numberthis{\label{SOSineq2}}
\end{align*}
where $M$ is defined as in (\ref{M}) -- (\ref{elementsofM}) and $S$ as in (\ref{matrix_S}), then for all $t>0$ solution to (\ref{pde1}) satisfies
\[
\|u(t,\cdot)\|_{L_2}\leq \sqrt{\frac{b}{a}}\|u(0,\cdot)\|_{L_2}\exp\{-\frac{\gamma}{2} t\},\numberthis{\label{SOSineq3}}
\]
where $a,b$ are defined as in (\ref{ab}).
\end{mythm}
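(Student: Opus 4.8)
The plan is to recognize that the algebraic identities in (\ref{SOSineq2}) are simply Positivstellensatz certificates of the pointwise inequalities demanded by Theorem~\ref{thpoly2}, and then to invoke that theorem directly; the proof is just the (one-directional) soundness of the SOS relaxation. First I would record the elementary fact that on the open unit box $\Omega=(0,1)^2$ the two multipliers satisfy $x_1(1-x_1)>0$ and $x_2(1-x_2)>0$. Together with the properties recalled in Section~2.2 — namely that every $p\in\Sigma[x]$ satisfies $p(x)\geq0$ for all $x\in\R^2$, and every $M\in\Sigma[\s^3(x)]$ satisfies $M(x)\geq0$ for all $x\in\R^2$ — this lets me read the required sign conditions directly off (\ref{SOSineq2}).

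For the first identity, since $n_1,n_2\in\Sigma[x]$ we have $n_1(x)\geq0$ and $n_2(x)\geq0$, so $x_1(1-x_1)n_1(x)+x_2(1-x_2)n_2(x)\geq0$ on $\Omega$, whence $s(x)=\theta+x_1(1-x_1)n_1(x)+x_2(1-x_2)n_2(x)\geq\theta>0$ for all $x\in\Omega$. In particular $a=\inf_{x\in\Omega}s(x)$ and $b=\sup_{x\in\Omega}s(x)$ as in (\ref{ab}) are well defined with $b\geq a\geq\theta>0$. For the second identity, since $Q_4,Q_5,Q_6\in\Sigma[\s^3(x)]$ each $Q_j(x)$ is positive semidefinite for every $x$, and scaling a positive semidefinite matrix by the nonnegative scalars $x_1(1-x_1)$ and $x_2(1-x_2)$ preserves positive semidefiniteness; hence the right-hand side of the second line of (\ref{SOSineq2}) is a sum of negative semidefinite matrices, giving $M(x)+\gamma S(x)\leq0$ for all $x\in\Omega$.

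At this point the hypotheses of Theorem~\ref{thpoly2} hold verbatim with the same data $s,p_1,\dots,p_4,\theta,\gamma$: we have produced $s(x)\geq\theta$ and $M(x)+\gamma S(x)\leq0$ for all $x\in\Omega$. Applying Theorem~\ref{thpoly2} then yields exactly the decay estimate (\ref{SOSineq3}) with $a,b$ defined as in (\ref{ab}), which completes the proof.

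I do not expect a genuine obstacle here, since the theorem merely records the modeling step in which the semialgebraic constraints ``$s\geq\theta$ on $\Omega$'' and ``$M+\gamma S\leq0$ on $\Omega$'' are replaced by sufficient SOS conditions. The only points that require care are using correctly that SOS matrices are pointwise positive semidefinite (already stated in Section~2.2) and that the box multipliers $x_1(1-x_1)$ and $x_2(1-x_2)$ are strictly positive on the \emph{open} domain $\Omega$, so that no boundary issue interferes with the strict bound $s(x)\geq\theta$.
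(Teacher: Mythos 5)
Your proposal is correct and follows essentially the same route as the paper: the paper's proof also just reads off $s(x)\geq\theta$ and $M(x)+\gamma S(x)\leq 0$ on $\Omega$ from the certificate (\ref{SOSineq2}) and invokes Theorem~\ref{thpoly2}. You merely spell out the pointwise nonnegativity of the SOS multipliers and the sign of $x_i(1-x_i)$ on the open box, which the paper leaves implicit.
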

\begin{proof}
If (\ref{SOSineq2}) is true, then for all $x\in\Omega$, $s(x)\geq\theta$ and $M(x)+\gamma S(x)\leq0$, which, combined with Theorem \ref{thpoly2}, gives (\ref{SOSineq3}).
\end{proof}

\section{NUMERICAL VERIFICATION OF THE PROPOSED METHOD}

\subsection{KISS model}

We applied the method described in Sections (\ref{Sec_POP}) and (\ref{Sec_SOS}) to stability of the biological KISS PDE named after Kierstead, Slobodkin and Skelam, which describes population growth on a finite area.
For more details see \cite{c28}.
The system is modeled by the following PDE.
\begin{align*}
u_t(t,x)= h \left(u_{x_1x_1}(t,x)+u_{x_2x_2}(t,x)\right)+ru(t,x),\numberthis{\label{kiss}}
\end{align*}
where $h,r>0$, $x\in\Omega\subset\mathbb{R}^2$ and scalar function $u$ satisfies zero Dirichlet boundary conditions.

It is claimed in \cite{c28} that if $\Omega$ is a square with edge of length $l$, then
\[
l_{cr}:=\sqrt{2\pi^2(\frac{h}{r})}\numberthis{\label{l_cr}}
\]
defines a critical length.
That means, if $l>l_{cr}$, then (\ref{kiss}) is unstable.
Alternatively, for given $l$ and $r$ (\ref{l_cr}) defines $h_{cr}$ as
\[
h_{cr}:=l^2r/2\pi^2.\numberthis{\label{D_cr}}
\]
Therefore, if $h<h_{cr}$, then (\ref{kiss}) is unstable.

For our purpose we fix $l=1$ and arbitrarily choose $r=4$.
Thus, according to (\ref{D_cr}), $h_{cr}\approx 0.203$.

Using a bisection search over $h$, we determine minimum $h_{cr}$ for which SOS problem in Theorem \ref{sosth}, with
\begin{align*}
&a(x)=h,\quad b(x)=0,\quad c(x)=h,\\
&d(x)=0, \quad e(x)=0, \quad f(x)=4\quad\text{for all}\ x\in\Omega\numberthis{\label{kisscoef3}}
\end{align*}
may be shown to be feasible. Results for different degrees of $s$ (deg(s)) are presented in Table (\ref{table_kiss_1}).

Now we choose $l=1$, $h=2$ and $r=4$.
Using a bisection search over $\gamma$, we determine the maximum $\gamma$ for which the SOS problem in Theorem \ref{sosth2}, with (\ref{kisscoef3}), may be shown to be feasible.
Results for different deg($s$) are presented in Table (\ref{table_kiss_2}). Using finite difference scheme, we numerically solve (\ref{kiss}) with $u(0,x)=10^3x_1x_2(1-x_1)(1-x_2)$.
Plots of $\log_{10}\left(\|u(t,\cdot)\|_{L_2}\right)$ versus $t$, using a numerical solution, and bounds on $\log_{10}\left(\|u(t,\cdot)\|_{L_2}\right)$, given by the proposed method for different deg$(s)$, are presented in Fig.~(\ref{kiss_pic}). These plots allow us to determine $\gamma$ by examining the rate of decrease in the $L_2$ norm.
Plots are aligned at $t=0$ in order to better compare our SOS estimates of $\gamma$ to the estimate of $\gamma$ derived from numerical simulation as a function of increasing deg($s$).\vspace{-0.2in}
\begin{figure}[t]
      \centering
      \hspace*{-0.5cm}\includegraphics[scale=0.19]{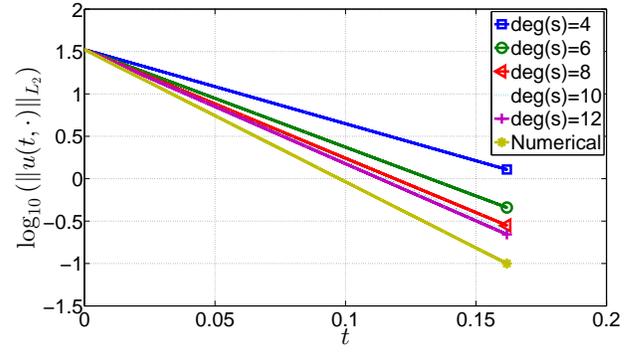}
      \caption{Semi-log plots of the $L_2$ norm of the numerical solution to (\ref{kiss}) with $u(0,x)=10^3x_1x_2(1-x_1)(1-x_2)$ and bounds, given by the proposed method with different deg($s)$.}\vspace*{-0.1in}
      \label{kiss_pic}
\end{figure}
\begin{table}
\caption{Minimum $h_{cr}$ vs $\text{deg}(s$) for (\ref{kiss})}\vspace*{-0.2in}
\label{table_kiss_1}
\begin{center}
\begin{tabular}{|c||c||c||c||c||c||c|}
\hline
deg($s$) &     4 &     6 &      8&    10  & 12 & analytic\\
\hline
$h_{cr}$ & 0.332 & 0.259 & 0.238 & 0.229  & 0.227 & 0.203\\
\hline
\end{tabular}
\end{center}\vspace*{-0.2in}
\end{table}
\begin{table}\vspace*{-0.2in}
\caption{Maximum $\gamma$ vs deg($s$) for (\ref{kiss}) with $h=2$}\vspace*{-0.2in}
\label{table_kiss_2}
\begin{center}
\begin{tabular}{|c||c||c||c||c||c|}
\hline
deg($s$) &     4 &    6    &  8  & 10 & 12 \\
\hline
$\gamma$ & 40.25 &   53    & 59  & 61 & 62 \\
\hline
\end{tabular}
\end{center}\vspace*{-0.3in}
\end{table}

\subsection{Randomly generated system}
Consider\vspace*{-0.01in}
\begin{align*}
u_t(t,x)=\;&(5x_1^2-15x_1x_2+13x_2^2)(u_{x_1x_1}(t,x)\\
&+u_{x_2x_2}(t,x))+(10x_1-15x_2)u_{x_1}(t,x)\\
&+(-15x_1+26x_2)u_{x_2}(t,x)-(17x_1^4-30x_2\\
&-25x_1^2-8x_2^3-50x_2^4)u(t,x),\\
u(0,x)=\;&10^3x_1x_2(1-x_1)(1-x_2)\numberthis{\label{our1}}\vspace*{-0.01in}
\end{align*}
where $x\in\Omega:=(0,1)^2$ and the scalar function $u$ satisfies zero Dirichlet boundary conditions.

Using a bisection search over $\gamma$, we determine maximum $\gamma$ for which SOS problem in Theorem \ref{sosth2}, with
\begin{align*}
a(x)&=5x_1^2-15x_1x_2+13x_2^2,\quad e(x)=-15x_1+26x_2,\\
c(x)&=5x_1^2-15x_1x_2+13x_2^2,\quad d(x)=10x_1-15x_2,\\
f(x)&=-(17x_1^4-30x_2-25x_1^2-8x_2^3-50x_2^4),\quad b(x)=0
\end{align*}
may be shown to be feasible.

Using finite difference scheme, we numerically solve (\ref{our1}).
The estimated rate of decay, based on numerical solution, is $13.07$.
The computed rate of decay, based on our SOS method, is $12.5$ for deg$(s)=8$.
Plots are given in Fig.~\ref{ex2_pic}
and, as for Fig.~\ref{kiss_pic}, are aligned at $t=0$.

\begin{figure}[t]
      \centering
      \hspace*{-0.5cm}\includegraphics[scale=0.18]{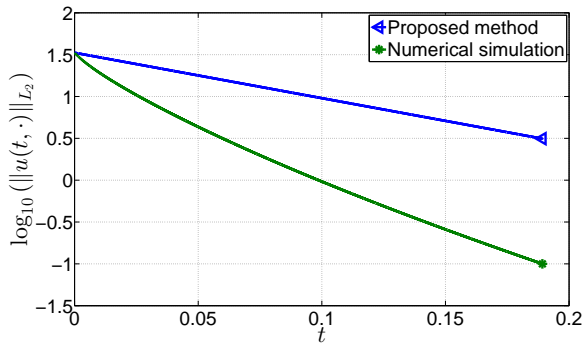}
      \caption{Semi-log plots of the $L_2$ norm of the numerical solution to (\ref{our1}) and bound, given by the proposed method with deg($s)=8$.}\vspace*{-0.25in}
      \label{ex2_pic}
\end{figure}

\section{Conclusion and future work}
In this paper we have presented a method which allows us to search for Lyapunov functionals for parabolic linear PDEs with two spatial variables and spatially dependent coefficients.
We demonstrated accuracy of the method in estimating critical diffusion for the KISS PDE and the rate of decay for the KISS PDE and for a randomly chosen PDE.
In future work, we will extend the method by considering Lyapunov functionals of more complicated types, e.g. functionals with semi-separable kernels, as in \cite{c29}.
In addition, we will consider alternative boundary conditions, as well as semi-linear and nonlinear 2D and 3D parabolic PDEs.

\end{document}